\newtheorem{thm}{Theorem}
\newtheorem{cor}[thm]{Corollary}
\title{Asymptotic density of Catalan numbers modulo 3 and powers of 2}
\author{Rob Burns}
\begin{document}
\maketitle
\begin{abstract}
We establish the asymptotic density of the Catalan numbers modulo $3$ and modulo $2^k$ for $k~\in~\mathbb{N}$ and $k~\geq~1$.
\end{abstract}

\section{Introduction}
%\section{}
%\subsection{}
The {\it Catalan numbers\/} are defined by
$$
C_n := \frac{1}{n+1}\binom{2n}{n}.
$$
There has been much work in recent years and also going back to Kummer \cite{Kum1852} on analysing the Catalan numbers modulo primes and prime powers.  Deutsch and Sagan \cite{Sagan2006} provided a complete characterisation of Catalan numbers modulo 3. A characterisation of the Catalan numbers modulo 2 dates back to Kummer. Eu, Liu and Yeh \cite{Eu2008} provided a complete characterisation of Catalan numbers modulo 4 and 8. This was extended by Liu and Yeh \cite{Liu2010} to a complete characterisation modulo 8, 16 and 64. This result was restated in a more compact form by Kauers, Krattenthaler and M\"uller in \cite{KKM2012} by representing the generating function of $C_n$ as a polynomial involving a special function. The polynomial for $C_n$ modulo 4096 was also calculated. A method for extracting the coefficients of the generating function (i.e. $C_n$ modulo a prime power) was provided, though given the complexity of the polynomials (the polynomial for the $4096$ case takes a page and a half to write down) this would need to be done by computer. Krattenthaler and M\"uller  \cite{KM2013} used a similar method to examine $C_n$ modulo powers of $3$. They wrote down the polynomial for the generating function of $C_n$ modulo $9$ and $27$ thereby generalised the mod $3$ result of \cite{Sagan2006}. The article by Lin \cite{Lin:2010ab} discussed the possible values of the odd Catalan numbers modulo $2^k$ and Chen and Jiang \cite{Chen2013} dealt with the possible values of the Catalan numbers modulo prime powers. 
Rowland and Yassawi \cite{RY2013} investigated $C_n$ in the general setting of automatic sequences.  The values of $C_n$ (as well as other sequences) modulo prime powers can be computed via automata. Rowland and Yassawi provided algorithms for creating the relevant automata. They established a full characterisation of $C_n$ modulo $\{2, 4, 8, 16, 3, 5 \}$ in terms of automata. They also extended previous work by establishing forbidden residues for $C_n$ modulo $\{32, 64, 128, 256, 512 \}$. In theory the automata can be constructed for any prime power but computing power and memory quickly becomes a barrier.

Some of this work can be used to determine the asymptotic densities of the Catalan numbers modulo $2^k$ and $3$.

In section~\ref{mod2^k} we will use a result of Liu and Yeh \cite{Liu2010} to obtain some asymptotic densities of the Catalan numbers modulo powers of $2$. Here, the asymptotic density of a subset $S$ of $\mathbb{N}$ is defined to be
$$
\lim_{N \to \infty} \frac {1}{N} \#\{ n \in S : n \leq N \}
$$
if the limit exists, where $\#S$ is the number of elements in a set $S$. Since the Catalan numbers $C_n$ are highly multiplicative as $n$ increases, it is expected that for a fixed $m \in \mathbb{N}$ ``almost all'' Catalan numbers are divisible by $m$. We show that this is the case when $m = 3$ and $m = 2^k$ for $k \geq 1$.

\section{Catalan numbers modulo $2^k$}
\label{mod2^k}
Firstly, as in \cite{Liu2010}, let the $p$-adic order of a positive integer $n$ be defined by
$$
\omega_p(\, n )\, := max\{\alpha \in \mathbb{N} : p^{\alpha} | n\} 
$$ 
and the cofactor of $n$ with respect to $p^{ \omega_p(\, n )\, } $  be defined by 
$$
CF_p(\, n )\,  := \frac {n} {p^{\omega_p(\, n )\,}}.
$$ 
In addition the function $\alpha : \mathbb{N} \to \mathbb{N}$  is defined by 
$$
\alpha (\, n )\, := \frac {CF_2(\, n + 1 )\, - 1} {2}.
$$
For a number $p$, we write the base $p$ expansion of a number $n$ as 
$$
[\, n ]\,_{p} = \langle n_{r} n_{r-1} ... n_{1} n_{0} \rangle
$$ 
where $n_{i} \in [\, 0, p - 1]\,$  and 
$$
n = n_{r} p^{r} + n_{r-1} p^{r-1} + ... + n_{1} p + n_{0}.
$$ 
Then the function $d_p : \mathbb{N} \to \mathbb{N}$ is defined by
\begin{equation}
\label{d_p}
d_p(\, n )\, := \#\{i: n_{i} = 1\}.
\end{equation}  
Here we will only be interested in the case $p = 2$ and will refer to $d_2$ as merely $d$. When $p=2$, $d(n)$ is the sum of the digits in the base 2 representation of $n$.

The following result appears in \cite{Liu2010}.
\bigskip

\begin{thm}[\emph{Corollary 4.3 of \cite{Liu2010}} ]
\label{cor4.3}
In general, we have $\omega_{2} (\, C_{n} )\, = d (\, \alpha (\, n )\, )\,$   In particular,  
$C_{n} \equiv 0 \mod  2^{k}$  if and only if  $d (\, \alpha (\, n )\, )\, \geq k$,  
and  
$C_{n} \equiv 2^{k - 1} \mod   2^{k}$  if and only if  $d (\, \alpha (\, n )\,  )\, = k - 1.$
\end{thm}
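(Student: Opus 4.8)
The plan is to obtain $\omega_2(C_n)$ directly from Legendre's formula and then match it against $d(\alpha(n))$ via a transparent decomposition of $n$ in base $2$. First I would write $C_n = \frac{1}{n+1}\binom{2n}{n}$, so that $\omega_2(C_n) = \omega_2\!\binom{2n}{n} - \omega_2(n+1)$. Applying Legendre's formula in the form $\omega_2(m!) = m - d(m)$ to $\omega_2\!\binom{2n}{n} = \omega_2((2n)!) - 2\,\omega_2(n!)$, and using $d(2n) = d(n)$ (multiplication by $2$ merely appends a zero digit), collapses everything to the clean identity $\omega_2\!\binom{2n}{n} = d(n)$. This yields
\[
\omega_2(C_n) = d(n) - \omega_2(n+1).
\]

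The heart of the argument is to show that the right-hand side equals $d(\alpha(n))$. I would set $v := \omega_2(n+1)$ and $m := CF_2(n+1)$, so that $n+1 = 2^v m$ with $m$ odd and $\alpha(n) = (m-1)/2$. Since $m$ is odd, $m-1 = 2\alpha(n)$, and hence
\[
n = 2^v m - 1 = 2^{v+1}\alpha(n) + (2^v - 1).
\]
The key observation is that the two summands occupy disjoint blocks of binary digits: $2^v - 1$ fills positions $0$ through $v-1$ with $v$ consecutive ones, position $v$ is forced to be $0$, and $2^{v+1}\alpha(n)$ places the binary expansion of $\alpha(n)$ into positions $\geq v+1$. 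Reading off digit sums then gives $d(n) = d(\alpha(n)) + v$, so that $\omega_2(C_n) = d(n) - v = d(\alpha(n))$, which is the general statement.

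The two particular claims follow formally from the valuation formula. By definition $C_n \equiv 0 \pmod{2^k}$ means $\omega_2(C_n) \geq k$, i.e. $d(\alpha(n)) \geq k$. For the second, writing $C_n = 2^{\omega_2(C_n)} u$ with $u$ odd, one checks that $2^{k-1}u \equiv 2^{k-1} \pmod{2^k}$ whenever $u$ is odd; hence $C_n \equiv 2^{k-1} \pmod{2^k}$ holds exactly when $\omega_2(C_n) = k-1$, which translates to $d(\alpha(n)) = k-1$.

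I expect the only genuine obstacle to be the bookkeeping in the binary decomposition: one must verify that position $v$ of $n$ is truly $0$ and that no carries blur the boundary between the block of $v$ trailing ones and the shifted copy of $\alpha(n)$. Once the identity $n = 2^{v+1}\alpha(n) + (2^v - 1)$ is established this is immediate, since the shifted term contributes nothing below position $v+1$, but it is the one step where care is required.
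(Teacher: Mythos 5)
Your proof is correct, but note that the paper itself offers no proof of this statement: it is imported verbatim as Corollary~4.3 of \cite{Liu2010}, so there is no in-paper argument to compare against. Your derivation is a legitimate standalone one and follows the standard route: Legendre's formula $\omega_2(m!)=m-d(m)$ collapses $\omega_2\binom{2n}{n}$ to $d(n)$ (this is Kummer's classical result that $\omega_2\binom{2n}{n}$ counts the carries in $n+n$, i.e.\ $d(n)$), and the decomposition $n=2^{v+1}\alpha(n)+(2^v-1)$ with $v=\omega_2(n+1)$ gives $d(n)=d(\alpha(n))+v$ because the two summands occupy disjoint digit blocks separated by a forced $0$ in position $v$. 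That digit identity is in fact exactly the observation the paper does prove later, inside Theorem~\ref{a} (``$d(n)=d(\alpha)+s$''), so your argument is fully consistent with the paper's machinery. The two ``in particular'' claims are handled adequately; the only point worth tightening is the converse direction of the second one --- you should say explicitly that any integer congruent to $2^{k-1}$ modulo $2^k$ has $2$-adic valuation exactly $k-1$ (since it equals $2^{k-1}(1+2t)$), which rules out both $\omega_2(C_n)\geq k$ and $\omega_2(C_n)<k-1$. With that one sentence added, the proof is complete.
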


\bigskip

Since we are looking at the Catalan numbers $ C_n \mod 2^k$ it will be convenient to first consider numbers $n < 2^t$ for a fixed but arbitrary $t \in \mathbb{N}$. This produces some interesting and simple formulae. We have the following results:
\bigskip

\begin{thm}
\label{a}
$\#\{n < 2^t: d(\, \alpha (\, n )\, )\, = k \} = \binom {t} {k + 1}.$
\end{thm}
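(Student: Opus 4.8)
The plan is to reduce the count to a statement about binary digit sums and then evaluate a standard binomial coefficient. The engine is the elementary identity $d(\alpha(n)) = d(n+1) - 1$, which I would establish first. Write $m = n+1$ and let $b = CF_2(m)$ be its odd part, so that $m = 2^{a} b$ with $b$ odd. Two observations drive everything: since dividing by $2^{a}$ only deletes trailing zeros from the binary expansion, $d(m) = d(b)$; and since $b = CF_2(m) = 2\alpha(n) + 1$ by the definition of $\alpha$, the binary expansion of $b$ is precisely that of $\alpha(n)$ with a single $1$ appended in the units place, whence $d(b) = d(\alpha(n)) + 1$. Combining the two gives $d(\alpha(n)) = d(m) - 1 = d(n+1) - 1$.

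With this identity in hand the condition $d(\alpha(n)) = k$ becomes simply $d(n+1) = k+1$. So, setting $m = n+1$, it suffices to count the integers $m$ with $d(m) = k+1$ as $n$ ranges over its domain; for $n \in \{1, \ldots, 2^{t}-1\}$ this is $m \in \{2, \ldots, 2^{t}\}$.

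The next step is to shift the range of $m$ onto the ``clean'' interval $\{0, 1, \ldots, 2^{t}-1\}$, whose elements are exactly the length-$t$ binary strings occupying positions $0, \ldots, t-1$. The only differences between $\{2,\ldots,2^{t}\}$ and $\{0,\ldots,2^{t}-1\}$ lie at the endpoints: one drops $m=0$ and $m=1$ and adjoins $m=2^{t}$. Here $d(0) = 0 \neq k+1$, while $d(1) = d(2^{t}) = 1$, so the deletion of $m=1$ and the insertion of $m=2^{t}$ either both count (when $k=0$) or both fail to count, and in either case they cancel. Hence the desired quantity equals $\#\{m \in \{0, \ldots, 2^{t}-1\} : d(m) = k+1\}$, the number of length-$t$ binary strings with exactly $k+1$ ones, which is $\binom{t}{k+1}$.

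The only genuinely delicate point is this boundary bookkeeping: landing on $\binom{t}{k+1}$ exactly, rather than off by one in the $k=0$ case, depends on pinning down the range of $n$ correctly and on the cancellation of the two digit-sum-one endpoints $m=1$ and $m=2^{t}$. By contrast, the digit-sum identity is routine once the ``append a $1$-bit'' reading of $CF_2(n+1) = 2\alpha(n)+1$ is made explicit, and the final binomial evaluation is immediate. I would verify both the identity and the endpoint count against small cases such as $t=2$ and $t=3$ before writing the argument up.
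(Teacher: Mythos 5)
Your proof is correct, but it takes a genuinely different route from the paper's. The paper stratifies the count by $s = \omega_2(n+1)$ (equivalently, the number of trailing $1$'s of $n$), observes that for each admissible $s$ there are $\binom{t-1-s}{k}$ choices of $\alpha$, and then sums $\sum_{i=k}^{t-1}\binom{i}{k} = \binom{t}{k+1}$ via the hockey-stick identity. You instead prove the single identity $d(\alpha(n)) = d(n+1) - 1$ (stripping trailing zeros from $n+1$ and then the final $1$-bit of its odd part), which converts the condition into $d(n+1) = k+1$ and lets the binomial coefficient appear for the direct combinatorial reason that it counts $(k+1)$-subsets of $t$ bit positions --- no summation identity needed. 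Your approach also buys a more honest treatment of the boundary: the theorem's formula is only correct for $n$ ranging over $\{1,\dots,2^t-1\}$ (as Corollary~\ref{b}'s use of the total $2^t-1$ confirms), and your cancellation of the endpoints $m=1$ and $m=2^t$ makes this explicit, whereas the paper's enumeration quietly includes $n=0$ and omits $n=2^t-1$, two discrepancies that happen to cancel in the $k=0$ case. The two arguments are of course cousins: the paper's stratification by $s$ is exactly the decomposition of your count of $\{m : d(m)=k+1\}$ according to the position of the lowest set bit of $m$.
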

\begin{proof}
Let  
$$
\alpha = \alpha (\, n )\, = \frac {CF_2(\, n + 1 )\, - 1} {2}.
$$
Then $n + 1 = 2^{s} (\, 2 \alpha + 1 )\, $ for some arbitrary $s \in \mathbb{N} : s  \geq 0 $ .  Writing $n+1 $  in base $2$ we have
$$
[\, n + 1 ]\,_{2} = \langle [\, \alpha ]\,_{2}, 1, 0 ... 0, 0 \rangle 
$$  
where there are $s$  $0$'s at the end and $s \geq 0 $  is arbitrary. So,
$$
[\, n ]\,_{2} = \langle [\, \alpha ]\,_{2}, 0, 1 ... 1, 1 \rangle 
$$  
where there are $s$  $1$'s  at the end and $s \geq 0 $  is arbitrary. It can be seen that $d(\, n )\, = d(\, \alpha )\, + s. $
Since $n < 2^{t} $  and  $d(\, \alpha (\, n )\, )\, = k $  the possible base $2$ representations of $n$ are
$$
[\, n ]\,_{2} = \langle [\, \alpha ]\,_{2}, 0 \rangle :  \alpha < 2^{t - 1}
$$
$$
[\, n ]\,_{2} = \langle [\, \alpha ]\,_{2}, 0, 1 \rangle : \alpha < 2^{t - 2}
$$
$$ . $$
$$ . $$
$$
[\, n ]\,_{2} = \langle [\, \alpha ]\,_{2}, 0, 1, 1, ... 1 \rangle : \alpha < 2^{k} 
$$  
and there are $(\, t - k - 1)\, $   $1$'s at the end of the last representation.

Therefore, counting each of these possibilities and using the fact that $d (\, \alpha )\, = k $  gives
$$
\#\{n < 2^t: d(\, \alpha (\, n )\, )\, = k \} = \sum_{i = k}^{t - 1} \binom{i} {k} = \binom{t} {k + 1}.
$$
\end{proof}

\bigskip

\begin{cor}
\label{b}
$\#\{n < 2^t: C_{n} \equiv 0 \mod {2^k} \} =  \sum_{i = k+1}^{i = t} \binom{t} {i}.$
\end{cor}
\begin{proof}
The corollary follows from Theorem~\ref{cor4.3} and Theorem~\ref{a} above since
$$
\#\{n < 2^t: C_{n} \equiv 0 \mod {2^k} \}
$$
$$
\mbox{$= \#\{n < 2^t: d(\, \alpha (\, n )\,  )\,  \geq k \}$   from Theorem~\ref{cor4.3}}
$$
$$
= 2^{t} - 1 -  \sum_{i = 0}^{k - 1} \#\{n < 2^t: d(\, \alpha (\, n )\,  )\,  = i \}
$$
$$
\mbox{$= 2^{t} - 1 - \sum_{i = 0}^{k - 1} \binom{t} {i + 1} $  from Theorem~\ref{a}}
$$
$$
= 2^{t} - \sum_{i = 0}^{k} \binom{t} {i} 
$$
$$
\mbox{$= \sum_{i = k + 1}^{t} \binom{t} {i} $  since  $2^{t} = \sum_{i = 0}^{i = t} \binom{t} {i}$}
$$
\end{proof}
\bigskip
\begin{cor}
\label{c}
$\#\{n < 2^t: C_{n} \equiv 2^{k-1} \mod {2^k} \} =  \binom{t} {k}.$
\end{cor}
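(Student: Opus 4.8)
The plan is to obtain this corollary as an immediate specialization of the two results already in hand, since no new combinatorial work is required. First I would invoke Theorem~\ref{cor4.3}, which asserts that $C_{n} \equiv 2^{k-1} \pmod{2^{k}}$ holds precisely when $d(\, \alpha(\, n )\, )\, = k - 1$. This converts the congruence condition on the Catalan number into a condition on the digit sum of $\alpha(n)$, so that the quantity we wish to count equals $\#\{n < 2^{t} : d(\, \alpha(\, n )\, )\, = k - 1\}$.

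Next I would apply Theorem~\ref{a}. That theorem gives $\#\{n < 2^{t} : d(\, \alpha(\, n )\, )\, = k\} = \binom{t}{k+1}$ for an arbitrary nonnegative integer $k$, so specializing its parameter to $k-1$ in place of $k$ yields $\#\{n < 2^{t} : d(\, \alpha(\, n )\, )\, = k - 1\} = \binom{t}{(k-1)+1} = \binom{t}{k}$. Chaining this identity with the equivalence supplied by Theorem~\ref{cor4.3} gives the claimed formula directly.

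There is no substantive obstacle here; the argument is essentially a one-line composition, and the only point worth checking is that the specialization behaves correctly at the boundary. When $k = 1$ the condition reduces to $d(\, \alpha(\, n )\, )\, = 0$, i.e. $\alpha(n) = 0$, which forces $CF_{2}(\, n+1 )\, = 1$ and hence $n = 2^{s} - 1$ for $1 \leq s \leq t$; this gives exactly $t = \binom{t}{1}$ admissible values, in agreement with the formula. Likewise, when $k > t$ the range condition $\alpha < 2^{t-1}$ in Theorem~\ref{a} leaves no room and both sides vanish. Hence the corollary follows.
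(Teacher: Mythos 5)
Your proof is correct and follows exactly the paper's own route: apply Theorem~\ref{cor4.3} to rewrite the congruence $C_n \equiv 2^{k-1} \pmod{2^k}$ as the condition $d(\,\alpha(\,n)\,)\, = k-1$, then substitute $k-1$ for $k$ in Theorem~\ref{a} to get $\binom{t}{k}$. The extra boundary checks are harmless but not needed.
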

\begin{proof}
From Theorem~\ref{cor4.3},
$$
\#\{n < 2^t: C_{n} \equiv 2^{k-1} \mod {2^k} \} 
$$
$$= \#\{n < 2^t: d(\, \alpha (\, n )\,  )\, =  k - 1 \}$$
$$
\mbox{$= \binom{t} {k} $  from Theorem~\ref{a}.}
$$
\end{proof}

\bigskip

Theorem~\ref{a} can be used to establish the asymptotic density of the set
$$
\{n < N: C_{n} \equiv 0 \mod {2^k} \}
$$
\bigskip
\begin{thm}
\label{d}
For $k \in \mathbb{N}$ with $k \geq 1$ we have
$$
\lim_{N \to \infty} \frac {1}{N} \#\{n < N: C_{n} \equiv 0 \mod {2^k} \} = 1.
$$
\end{thm}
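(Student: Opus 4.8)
The plan is to pass to the complementary set and to show that the Catalan numbers \emph{not} divisible by $2^k$ form a set of density $0$; the claim then follows at once, since the density of the divisible set is $1$ minus the density of its complement.

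First I would record the count at powers of $2$. By Corollary~\ref{b}, for every $t$ we have
$$
\#\{n < 2^t : C_n \equiv 0 \mod {2^k}\} = \sum_{i=k+1}^{t}\binom{t}{i},
$$
so, since there are $2^t = \sum_{i=0}^{t}\binom{t}{i}$ integers $n$ with $0 \le n < 2^t$, the complementary count is
$$
\#\{n < 2^t : C_n \not\equiv 0 \mod {2^k}\} = \sum_{i=0}^{k}\binom{t}{i}.
$$
The key structural observation is that, for \emph{fixed} $k$, this is a polynomial in $t$ of degree $k$, whereas the number of candidates up to $2^t$ grows like $2^t$. Hence along the subsequence $N = 2^t$ the complementary density already tends to $0$.

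To upgrade this to a genuine limit over all $N$, I would sandwich an arbitrary $N$ between consecutive powers of $2$. Given $N$, choose $t$ with $2^t \le N < 2^{t+1}$, so that $t \to \infty$ as $N \to \infty$. Enlarging the range of $n$ only increases the count, so monotonicity and the display above give
$$
\#\{n < N : C_n \not\equiv 0 \mod {2^k}\} \le \#\{n < 2^{t+1} : C_n \not\equiv 0 \mod {2^k}\} = \sum_{i=0}^{k}\binom{t+1}{i}.
$$
Dividing by $N \ge 2^t$ yields
$$
\frac{1}{N}\#\{n < N : C_n \not\equiv 0 \mod {2^k}\} \le \frac{1}{2^t}\sum_{i=0}^{k}\binom{t+1}{i},
$$
and the right-hand side tends to $0$ because the numerator is $O(t^k)$ while the denominator is $2^t$.

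Finally, writing $\#\{n < N : C_n \equiv 0 \mod {2^k}\} = N - \#\{n < N : C_n \not\equiv 0 \mod {2^k}\}$ and dividing by $N$, the last inequality gives
$$
1 \ge \frac{1}{N}\#\{n < N : C_n \equiv 0 \mod {2^k}\} \ge 1 - \frac{1}{2^t}\sum_{i=0}^{k}\binom{t+1}{i} \longrightarrow 1,
$$
so the squeeze theorem produces the limit $1$. The only real content beyond Corollary~\ref{b} is the passage from powers of $2$ to arbitrary $N$, and the single point needing care is the elementary fact that a polynomial of fixed degree $k$ in $t = \lfloor \log_2 N \rfloor$ is negligible against $2^t$; this is precisely the mechanism by which ``almost all'' Catalan numbers turn out to be divisible by $2^k$.
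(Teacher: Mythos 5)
Your proposal is correct and follows essentially the same route as the paper: sandwich $N$ between consecutive powers of $2$, bound the count of $n < N$ with $C_n \not\equiv 0 \bmod 2^k$ by the count up to $2^{t+1}$ (a polynomial of degree $k$ in $t$, obtained from Theorem~\ref{a}/Corollary~\ref{b}), and observe that this is negligible against $N \geq 2^t$. The only cosmetic difference is that you invoke Corollary~\ref{b} and complement, while the paper sums the counts $\#\{n < N : d(\alpha(n)) = i\}$ for $i < k$ directly from Theorem~\ref{a}; the resulting bound is the same.
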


\begin{proof}
Let $N \in \mathbb{N}$ and choose $r \in \mathbb{N}$ such that
$$
2^{r} \leq N < 2^{r+1}
$$
Then  $r = \lfloor \log_2(\, N)\,  \rfloor$ and
$$
\#\{n < N: d(\, \alpha (\, n )\, )\, = k \} 
$$
$$
\leq \#\{n < 2^{r+1}: d(\, \alpha (\, n )\, )\, = k \}
$$ 
$$
= \binom {r+1} {k + 1}.
$$
So,
$$
\#\{n < N: C_{n} \equiv 0 \mod {2^k} \}
$$ 
$$
= N - \#\{n < N: d(\, \alpha (\, n )\,  )\,  < k \} 
$$
$$
= N - \sum_{i = 0}^{k - 1} \#\{n < N: d(\, \alpha (\, n )\,  )\,  = i \} 
$$
$$
\geq N - \sum_{i = 0}^{k - 1} \binom {r+1} {i + 1}.
$$
And so,
$$
\frac {1}{N} \#\{n < N: C_{n} \equiv 0 \mod {2^k} \} 
$$
$$
\geq 1 - \frac {1}{N} P_{k} (\, r )\,
$$
\begin{equation}
\label{eq1}
= 1 - \frac {1}{N} P_{k} (\, \lfloor \log_2(\, N)\,  \rfloor )\,
\end{equation}
where $P_{k} (\, r )\,$ is a polynomial in $r$ of degree $k$ with coefficients depending on $k$. Since $k$ is fixed and 
$$
\lim_{N \to \infty} \frac {1}{N} (\, \lfloor \log_2(\, N)\,  \rfloor  )\, ^{k} = 0
$$
for fixed $k$, the second term in (\ref{eq1}) is zero in the limit and
$$
\lim_{N \to \infty} \frac {1}{N} \#\{n < N: C_{n} \equiv 0 \mod {2^k} \} = 1.
$$
\end{proof}

\section{Catalan numbers modulo 3}
Deutsch and Sagan \cite{Sagan2006} provided a characterisation of the Catalan numbers modulo~3. They defined a set $T^*(01)$ of natural numbers using the base 3 representation. If $[\,n]\,_{3}=\langle~n_{i} ~\rangle$ is the base 3 representation of $n$ then
$$
\mbox{ $T^{*} (01) = \{n \geq 0: n_i = 0$ or $1$ for all $i \geq 1 \}$ }.
$$
They then defined the function $d_{3}^{*}(\, n \,)$ similar to $d_p(\, n )\,$ in (\ref{d_p}) as 
$$
d_3^*(n) = \#\{n_i: i \geq 1, n_i = 1 \}
$$

\bigskip

\begin{thm}
\label{c3-1}
The asymptotic density of the set $T^*(01)$ is zero.
\end{thm}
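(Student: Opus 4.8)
The plan is to mirror the squeeze argument used in the proof of Theorem~\ref{d}, replacing the binomial count there by a direct count of admissible base-$3$ digit strings. First I would count the elements of $T^*(01)$ below a pure power $3^t$ exactly. Writing each $n < 3^t$ with exactly $t$ base-$3$ digits $\langle n_{t-1} \dots n_1 n_0 \rangle$ (padding with leading zeros if necessary), membership in $T^*(01)$ forces $n_i \in \{0,1\}$ for each of the $t-1$ positions with $i \geq 1$, while the units digit $n_0$ ranges freely over $\{0,1,2\}$. Counting independent choices therefore gives
$$
\#\{n < 3^t : n \in T^*(01)\} = 3 \cdot 2^{t-1}.
$$
The factor of $3$ contributed by the unrestricted units digit is the one place to take care, since the defining condition on $T^*(01)$ only constrains the positions $i \geq 1$.

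Dividing by $3^t$ yields the density along the subsequence $N = 3^t$:
$$
\frac{1}{3^t}\#\{n < 3^t : n \in T^*(01)\} = \frac{3}{2}\left(\frac{2}{3}\right)^{t},
$$
which tends to $0$ geometrically as $t \to \infty$. This already settles the limit along powers of $3$, and the remaining work is only to interpolate to arbitrary $N$.

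To pass to general $N$, I would invoke monotonicity of the counting function together with the bound above, exactly as in Theorem~\ref{d}. Given $N$, choose $t$ with $3^t \leq N < 3^{t+1}$, so that $t = \lfloor \log_3 N \rfloor$. Since the elements of $T^*(01)$ below $N$ form a subset of those below $3^{t+1}$, and since $N \geq 3^t$, we obtain
$$
\frac{1}{N}\#\{n < N : n \in T^*(01)\} \leq \frac{3 \cdot 2^{t}}{3^{t}} = 3\left(\frac{2}{3}\right)^{t}.
$$
As $N \to \infty$ we have $t \to \infty$, so the right-hand side tends to $0$; since the left-hand side is nonnegative, the density is $0$, as claimed. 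I do not expect any genuine obstacle here: the computation is routine once the digit count is set up, and the only subtlety is the exceptional role of the units digit, which inflates the count by a constant factor but does not affect the geometric decay.
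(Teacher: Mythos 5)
Your proposal is correct and follows essentially the same route as the paper: both bound the count of $T^*(01)$ below $N$ by the count below $3^{t+1}$ where $3^t \leq N < 3^{t+1}$, obtain the bound $3\cdot 2^{t}$ by counting admissible digit strings, and divide by $N \geq 3^t$ to get a quantity of order $(2/3)^t \to 0$. The only difference is that you spell out the digit-counting step explicitly, which the paper leaves implicit.
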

\begin{proof}
Let $N \in \mathbb{N}$ and choose $k \in \mathbb{N}: 3^{k} \leq N < 3^{k+1}$. Then $k = \lfloor log_3 (\, N )\, \rfloor$ and

$$
\frac {1}{N} \# \{ n \leq N :  n \in  T^*(01) \}$$  
$$
\leq \frac {1}{N} 3 \times 2^{k}
$$
$$
\mbox{$\leq 3^{1-k} \times 2^{k} \to 0$ as $N \to \infty.$}
$$
\end{proof}

The following theorem comes from \cite{Sagan2006}.

\bigskip

\begin{thm}
\label{c3-2}
(Theorem 5.2 of \cite{Sagan2006}) The Catalan numbers satisfy
$$
\mbox{$C_n~\equiv~(-1)^{d_3^*(n+1)}~\mod~3 $ if $ n~\in~T^*(01)-1$}
$$
$$
\mbox{$C_n~\equiv~0\mod~3$ otherwise.}
$$
\end{thm}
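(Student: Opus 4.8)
The plan is to reduce everything to Lucas' theorem modulo $3$, applied to the classical identity
$$
C_n = \binom{2n}{n} - \binom{2n}{n+1}.
$$
Writing $n$, $2n$ and $n+1$ in base $3$, Lucas' theorem evaluates each binomial coefficient as a product $\prod_i \binom{(2n)_i}{n_i}$ (respectively $\prod_i \binom{(2n)_i}{(n+1)_i}$) of digit-wise binomials, each lying in $\{0,1,2\}$ and equal to $0$ as soon as some lower digit exceeds the corresponding digit of $2n$. Since $\binom{2}{1}\equiv -1 \pmod 3$ while $\binom{0}{0}=\binom{2}{0}=\binom{1}{0}=\binom{1}{1}=\binom{2}{2}=1$, the whole computation comes down to (i) deciding when the products are nonzero and (ii) counting the digit positions that contribute a factor $\binom{2}{1}$.

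First I would record the digit bookkeeping that links $n$, $2n$ and $m := n+1$. Let $s=\omega_3(n+1)$ be the number of trailing zeros of $m$ in base $3$. Subtracting $1$ turns these $s$ trailing zeros into $s$ trailing $2$'s and decreases the digit $m_s$ by one, so the digits of $n$ are completely determined by those of $m$; doubling then produces a controlled string of carries. The key quantitative fact, obtainable either from this carry count or directly from Kummer's and Legendre's formulas as $\omega_3\binom{2n}{n}-\omega_3(n+1)$, is that $\omega_3(C_n)=0$ precisely when every digit $m_i$ with $i\geq 1$ lies in $\{0,1\}$, i.e.\ exactly when $m=n+1\in T^*(01)$. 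This disposes of the ``otherwise'' case: if $n+1\notin T^*(01)$ then $3\mid C_n$.

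It remains to pin down the residue when $n+1\in T^*(01)$, and here I would split on whether $3\mid n+1$. If $3\nmid n+1$ (so $s=0$), then all digits of $n$ already lie in $\{0,1\}$, $\binom{2n}{n}$ is a unit, and a short computation of both digit products — distinguishing the units digit $n_0\in\{0,1\}$, which is exactly why $d_3^*$ omits position $0$ — collapses to $C_n\equiv (-1)^{d_3^*(n+1)}$. If $3\mid n+1$ (so $s\geq 1$), then $\binom{2n}{n}\equiv 0\pmod 3$ because of the carries, so $C_n\equiv -\binom{2n}{n+1}$; computing the digits of $2n=2m-2$ explicitly (they are $1$ at positions $0$ and $s$, equal to $2$ at positions $1,\dots,s-1$, and $2m_i$ above $s$) and pairing them against the digits of $m$ via Lucas gives $\binom{2n}{n+1}\equiv (-1)^{d_3^*(n+1)-1}$, whence again $C_n\equiv (-1)^{d_3^*(n+1)}$.

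I expect the main obstacle to be precisely the case $3\mid n+1$: there the first binomial $\binom{2n}{n}$ vanishes modulo $3$ even though $C_n$ does not, so the residue is carried entirely by the second term $\binom{2n}{n+1}$, and one must track the borrow from $2m-2$ together with the single ``lost'' unit at position $s$ that accounts for the $-1$ in the exponent. Keeping the trailing-zero block and the units digit separate throughout the carry analysis is the delicate part; once that bookkeeping is organised, both cases reduce to counting the positions $i\geq 1$ with $m_i=1$, which is the definition of $d_3^*(n+1)$.
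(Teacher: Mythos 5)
The paper does not actually prove this statement: Theorem~\ref{c3-2} is imported verbatim as Theorem~5.2 of Deutsch and Sagan \cite{Sagan2006}, and the only role it plays here is as an input (together with Theorem~\ref{c3-1}) to the density corollary. So there is no in-paper proof to compare against; what you have written is a self-contained replacement for the citation. Your argument is correct. I checked the key computations: the identity $C_n=\binom{2n}{n}-\binom{2n}{n+1}$ is standard; with $m=n+1$ and $s=\omega_3(m)$, the digits of $n$ are $2,\dots,2$ ($s$ of them), then $m_s-1$, then $m_i$ for $i>s$, and the carry count in $n+n$ exceeds $s$ exactly when some $m_i=2$ for $i\geq 1$, which gives the ``otherwise'' case via Kummer/Legendre. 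In the case $s=0$ both digit products contribute, and the two subcases $n_0=0$ (where $\binom{2n_0}{n_0+1}=\binom{0}{1}=0$) and $n_0=1$ (where $C_n\equiv -(-1)^{d_3^*(m)}-(-1)^{d_3^*(m)}\equiv(-1)^{d_3^*(m)}$) both collapse correctly; in the case $s\geq 1$ the digits of $2n=2m-2$ are $1$ at positions $0$ and $s$ and $2$ in between, so $\binom{2n}{n}\equiv 0$ because of $\binom{1}{2}$ at position $0$, while $\binom{2n}{m}\equiv(-1)^{d_3^*(m)-1}$ since $m_s=1$ pairs with the digit $1$ of $2n$ and is the one position of $d_3^*(m)$ not contributing a $\binom{2}{1}$. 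This matches what Deutsch and Sagan do in spirit (their proof also rests on Kummer/Lucas-type digit analysis of the central binomial coefficient), so relative to the cited source your route is essentially the standard one; relative to this paper, it supplies a proof where only a reference is given.
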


\bigskip

\begin{cor}
$$
\lim_{N \to \infty} \frac {1}{N} \#\{n < N: C_{n} \equiv 0 \mod {3} \} = 1.
$$
\end{cor}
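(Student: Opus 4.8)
The plan is to reduce the corollary directly to the two preceding theorems of this section, using the fact that Theorem~\ref{c3-2} gives an exact characterisation of exactly which $C_n$ fail to be divisible by $3$. According to that theorem, $C_n \not\equiv 0 \bmod 3$ precisely when $n \in T^*(01) - 1$, since in that case $C_n \equiv (-1)^{d_3^*(n+1)} \bmod 3$, which is always $\pm 1$ and hence nonzero, while $C_n \equiv 0 \bmod 3$ for all other $n$. Thus the set of ``bad'' indices (those with $C_n \not\equiv 0$) is exactly $\{n : n+1 \in T^*(01)\}$, a shift of the set $T^*(01)$ studied in Theorem~\ref{c3-1}.

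With this identification in hand, I would write
$$
\#\{n < N : C_n \equiv 0 \bmod 3\} = N - \#\{n < N : n+1 \in T^*(01)\}.
$$
The counting term on the right is obtained by substituting $m = n+1$, so that as $n$ ranges over $0 \leq n < N$ the value $m$ ranges over $1 \leq m \leq N$; hence
$$
\#\{n < N : n+1 \in T^*(01)\} \leq \#\{m \leq N : m \in T^*(01)\}.
$$
The key observation is simply that shifting the index by one cannot increase the asymptotic density, so the density of the bad set equals the density of $T^*(01)$ itself.

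Dividing through by $N$ then gives
$$
\frac{1}{N}\#\{n < N : C_n \equiv 0 \bmod 3\} \geq 1 - \frac{1}{N}\#\{m \leq N : m \in T^*(01)\},
$$
and by Theorem~\ref{c3-1} the subtracted term tends to $0$ as $N \to \infty$. Since the density is trivially bounded above by $1$, a squeeze argument forces the limit to equal $1$, which is the claim.

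I expect the only delicate point to be the bookkeeping of the index shift $n \mapsto n+1$: one must check that passing from $T^*(01) - 1$ to $T^*(01)$ does not disturb the density computation, which amounts to noting that the single shift affects the count by at most one element and is therefore negligible after dividing by $N$. Everything else is an immediate application of the characterisation in Theorem~\ref{c3-2} followed by the density estimate of Theorem~\ref{c3-1}; no new combinatorial input on the base-$3$ structure of $T^*(01)$ is needed, since that work has already been carried out.
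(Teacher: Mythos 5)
Your proposal is correct and follows the same route as the paper: combine the exact characterisation of Theorem~\ref{c3-2} (the set of $n$ with $C_n \not\equiv 0 \bmod 3$ is precisely $T^*(01)-1$) with the density-zero statement of Theorem~\ref{c3-1}. The paper states this combination in a single sentence, whereas you have spelled out the index shift and the squeeze argument explicitly; the extra bookkeeping is harmless and the conclusion is the same.
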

\begin{proof}
Combining theorems \ref{c3-1} and \ref{c3-2}, the set of $n$ such that $C_n$ is not congruent to \mbox{$0~\mod~3$} has asymptotic density $0$. 
\end{proof}

\bigskip

\bibliographystyle{plain}
\begin{small}
\bibliography{ref}

\begin{thebibliography}{1}

\bibitem{Chen2013}
Yong-Gao Chen and Wen Jiang.
\newblock Catalan numbers modulo a prime power.
\newblock {\em Integers}, 13(A36), 2013.

\bibitem{Sagan2006}
E.~Deutsch and B.E. Sagan.
\newblock Congruences for {C}atalan and {M}otzkin numbers and related
  sequences.
\newblock {\em Journal of Number Theory}, 117(1):191--215, 2006.

\bibitem{Eu2008}
Sen-Peng Eu, Shu-Chung Liu, and Yeong-Nan Yeh.
\newblock Catalan and {M}otzkin numbers modulo 4 and 8.
\newblock {\em European Journal of Combinatorics}, 29:1449--1466, 2008.

\bibitem{KKM2012}
Manuel Kauers, Christian Krattenthaler, and Thomas~W. M\"uller.
\newblock A method for determining the mod-$2^k$ behaviour of recursive
  sequences, with applications to subgroup counting.
\newblock {\em The Electronic Journal of Combinatorics}, 18(2), 2012.

\bibitem{KM2013}
Christian Krattenthaler and Thomas~W. M\"uller.
\newblock A method for determining the mod-$3^k$ behaviour of recursive
  sequences.
\newblock {\em ArXiv}, http://arxiv.org/abs/1308.2856, 2013.

\bibitem{Kum1852}
E.~E. Kummer.
\newblock \"{U}ber die {E}rg\"anzungss\"atze zu den allgemeinen
  {R}eciproci\-t\"atsgesetzen.
\newblock {\em J. Reine Angew.\ Math.\/}, 44:93--146, 1852.

\bibitem{Lin:2010ab}
Hsueh-Yung Lin.
\newblock The odd {C}atalan numbers modulo $2^k$.
\newblock {\em Integers}, 12(2):161--165, 2012.

\bibitem{Liu2010}
S.-C. Liu and J.~Yeh.
\newblock Catalan numbers modulo $2^k$.
\newblock {\em J Integer Sequences}, 13, 2010.

\bibitem{RY2013}
Eric Rowland and Reem Yassawi.
\newblock Automatic congruences for diagonals of rational functions.
\newblock {\em ArXiv}, https://arxiv.org/abs/1310.8635, 2013.

\end{thebibliography}
\end{small}

\end{document}